\numberwithin{equation}{section}
\def \al{\alpha}
\def \ga{\gamma}
\def \ze{\zeta}
\def \L{\Lambda}
\def \O{\Omega}
\def \C{\mathbb{C}}
\def \Q{\mathbb{Q}}
\def \R{\mathbb{R}}
\def \Z{\mathbb{Z}}
\def\n{\nabla}
\def\1{1\!\!\!\!1}
\def\im{\operatorname{Im}}
\newcommand{\<}{\langle}
\renewcommand{\>}{\rangle}
\theoremstyle{plain}
\newtheorem{theorem}{\bf Theorem}[section]
\newtheorem{lemma}[theorem]{\bf Lemma}
\theoremstyle{remark}
\newtheorem{rem}[theorem]{\bf Remark}
\renewcommand{\le}{\leqslant}
\renewcommand{\ge}{\geqslant}
\title[On the "even" periodic Schr\" odinger operator]
{On the spectrum of an "even" periodic Schr\" odinger operator
with a rational magnetic flux}
\author{N.~D.~Filonov, A.~V.~Sobolev}
\address{St. Petersburg Department of Steklov Mathematical Institute\\
27 Fontanka\\ St. Petersburg 191023\\ Russia\\ and 
Department of Physics, St. Petersburg State University,
Ulyanovskaya 1, Petrodvorets, St. Petersburg 198504\\ Russia}
\address{Department of Mathematics\\ University College London\\
Gower Street\\ London\\ WC1E 6BT UK} 
 \email{filonov@pdmi.ras.ru}
\email{a.sobolev@ucl.ac.uk}
\keywords{ 
The magnetic Schr\"odinger operator, absolute continuity, rational flux}
\subjclass[2010]{Primary 35Q40; Secondary 35J10}
\date{}
\begin{document}

\begin{abstract}
We study the 
Schr\"odinger operator on $L_2(\mathbb R^3)$ 
with periodic variable metric, and periodic electric and magnetic fields. 
It is assumed that the operator is reflection symmetric and 
the (appropriately defined) 
flux of the magnetic field is rational. 
Under these assumptions it is shown that 
the spectrum of the operator is absolutely 
continuous. Previously known results on absolute continuity for periodic 
operators 
were obtained for the zero magnetic flux.  
\end{abstract}

\maketitle

\section{Introduction and results}

In the last two decades a good deal of attention was focused on the 
absolute continuity of self-adjoint 
periodic differential elliptic operators of second order 
%see \cite{?} 
in dimension $d\ge 2$, i.e. of the operators of the form 
\begin{equation}\label{ham:eq}
H = \sum_{j, l =1}^d(D_j - A_j)g_{jl}(D_l - A_l) + V, 
\ \
D_j = -i\partial_j, 
\end{equation}
with a periodic symmetric positive-definite matrix 
$\{g_{jl}\} = \mathbf G$, and coefficients  
$\mathbf A = \{A_l\}$, $V$ which we interpret as 
magnetic and electric potentials respectively.   
If all the coefficients in \eqref{ham:eq} are periodic and satisfy 
suitable integrability and/or smoothness conditions, then 
the operator $H$ is known to be absolutely continuous 
for $d=2$. If $\mathbf G(\mathbf x) = g(\mathbf x)\mathbf I$ with a positive 
function $g$ then this conclusion extends to 
arbitrary $d \ge 2$. 
We do not provide a thorough bibliographical account 
and refer e.g. to \cite{BSu2}, \cite{KL} and  \cite{SS} for more detailed references. 

The case of general variable $\mathbf G$
in dimensions $d\ge 3$ remains unassailable, but there are some partial results.  
First, if the matrix $\mathbf G$ is not
smooth then the spectrum of $H$ may not be absolutely
continuous, see \cite{F}. Second, 
in L. Friedlander's paper \cite{Fr} 
the absolute continuity was obtained for smooth variable matrix $\mathbf G$ 
and smooth $\mathbf A$, $V$ for all dimensions $d\ge 2$ under the condition 
that the operator $H$ is reflection symmetric. 
Later the smoothness assumptions 
were relaxed by N. Filonov, M. Tikhomirov in \cite{FT}.  

In this note we address another open question of the theory: 
when is the spectrum absolutely continuous if instead of the magnetic potential 
$\mathbf A$ we assume that the magnetic field $\mathbf B = 
\operatorname{curl} \mathbf A$ is periodic? 
The traditional methods used to study the spectra of periodic operators 
are not directly applicable. However, under the additional condition 
of the reflection symmetry one can still use the ideas of \cite{Fr} and \cite{FT}.   
We concentrate on the physically relevant case $d=3$. 
Note that 
the case $d=2$ is also of interest but 
the requirement of the reflection symmetry automatically implies that 
the constant component of the magnetic field is zero, i.e. the 
magnetic potential itself becomes periodic. Thus for $d = 2$ 
the Friedlander's method 
would give no new information.   
At this point we should note that 
in general (i.e. without reflection symmetry), 
the two-dimensional case is dramatically different from 
the three-dimensional one. 
It suffices to observe that in the absence of electric field 
for $d = 3$ a constant magnetic field induces 
absolutely continuous spectrum, whereas 
for $d=2$ the spectrum consists of 
equidistant eigenvalues of infinite multiplicity, called 
Landau levels, see \cite{LL}. Thus for $d=2$ 
mechanisms responsible for the possible 
formation of the absolute continuous 
spectrum (e.g. with non-trivial periodic $V$) 
are very different. 

Let us proceed to the precise formulations. 
The operator $H$ is defined via the quadratic form 
\begin{equation}
\label{00}
h[u] = \int_{\R^3} 
\langle \mathbf G(\mathbf x) \left(-i\n 
- \mathbf A(\mathbf x)\right) u(\mathbf x), 
\left(-i\n 
- \mathbf A(\mathbf x)\right) u(\mathbf x)
\rangle d\mathbf x
+  \int V(\mathbf x) |u(\mathbf x)|^2 d\mathbf x, 
\end{equation}
with the domain $D[h] = C_0^\infty (\R^3)$
in the Hilbert space $L_2(\R^3)$. 
The coefficient $\mathbf G = \{g_{jl}(\mathbf x)\}, j,l = 1, 2, 3$,
is a symmetric matrix-valued function 
with real-valued entries
$g_{jl}(\mathbf x)$ which satisfies the conditions
\begin{align}
c|\boldsymbol\xi|^2\le &\ \langle\mathbf G(\mathbf x)\boldsymbol\xi,
\boldsymbol\xi\rangle\le C|\boldsymbol\xi|^2,  
\quad \forall \boldsymbol\xi \in
\R^3,\ \textup{a.a.} \ \   \mathbf x\in \R^3, \label{g:eq}
\\[0.2cm]
\mathbf G\in &\ \textup{Lip}(\R^3). \label{lip:eq}
\end{align} 
Here and everywhere below
by $C$ and $c$ with or
without indices we denote various positive 
constants whose precise value is unimportant.
The vector-field $\mathbf A$ and the function $V$ satisfy the conditions 
\begin{equation}
\label{01}
\mathbf A \in L_{p,\textup{\tiny loc}} (\R^3, \R^3), \qquad 
V \in L_{3/2,\textup{\tiny loc}} (\R^3),
\end{equation}
with $p=3$. 
Under the assumptions \eqref{g:eq}, \eqref{01} with $p=3$, and 
that $V$ is periodic, the form \eqref{00}  
is semibounded from below and closable (see e.g. \cite[\S 2]{MR}).
We denote by $H$ the self-adjoint operator in $L_2(\R^3)$
which corresponds to the closure of the form $h$. We write it formally as 
\begin{equation}
\label{03}
H = \langle( -i \n - \mathbf A), \mathbf G( -i \n - \mathbf A)\rangle + V.
\end{equation}
Since we assume that the magnetic field 
$\mathbf B(\mathbf x) = \operatorname{curl} \mathbf A(\mathbf x)$ is periodic, 
the magnetic potential can be represented in the form 
\begin{equation*}
\mathbf A(\mathbf x) = \mathbf a_0(\mathbf x) 
%\frac{1}{2} \ \mathbf B_0 \times \mathbf x 
+ \mathbf a(\mathbf x),
\end{equation*}
where $\mathbf a_0$ is a linear magnetic potential 
associated with the constant component 
$\mathbf B_0 = \operatorname{curl} \mathbf a_0(x)$ 
of the magnetic fields, and 
$\mathbf a$ is a periodic vector-potential. We align $\mathbf B_0$ 
with the positive direction of the $x_3$-axis, and choose 
for $\mathbf a_0(\mathbf x)$ the gauge 
$(-b x_2, 0, 0), b = |\mathbf B_0|\ge 0$, so that 
$\mathbf B_0  = (0, 0, b)$ and 
\begin{equation}\label{11}
\mathbf A(\mathbf x) =  (-b x_2, 0, 0) 
+ \mathbf a(\mathbf x). 
\end{equation}
We assume that with this choice of 
coordinates the matrix-valued function $\mathbf G$, the potentials $V$ and 
$\mathbf a$ are $(2\pi\mathbb Z)^3$-periodic:
\begin{equation}
\label{02}
\mathbf G(\mathbf x+2\pi \mathbf n) 
= \mathbf G(\mathbf x),\ 
V(\mathbf x+2\pi \mathbf n) 
= V(\mathbf x),\ 
\mathbf a(\mathbf x+2\pi \mathbf n) 
= \mathbf a(\mathbf x), 
 \quad \forall \mathbf n \in \Z^3.
\end{equation}
Furthermore, to ensure that the operator \eqref{03} is partially diagonalizable  
via the Floquet-Bloch-Gelfand decomposition, we assume that the flux of the 
constant component $  \mathbf B_0$ is integer, i.e. 
 \begin{equation}\label{13}
 \frac{1}{2\pi}\underset{(-\pi, \pi)^2} \int
 |\mathbf B_0| dx_1 dx_2 = 2\pi b \in \mathbb Z_+ = \{0, 1, \dots\}.
 \end{equation}
To describe the symmetry of the operator $H$ introduce the reflection 
map $\mathbf R: \R^3\to\R^3$: 
$$
\mathbf R(x_1, x_2, x_3) = (x_1, x_2, -x_3),
$$
and the associated operation on $u\in L_2(\R^3)$:
\begin{equation}\label{J:eq}
(J u)(\mathbf x) = u(\mathbf R\mathbf x).
\end{equation}
It is straightforward to check that $H$ commutes with $J$ 
if $\mathbf G$, $\mathbf a$ and $V$ satisfy the conditions 
\begin{equation}\label{even:eq}
\mathbf G(\mathbf R\mathbf x) 
= \mathbf R\mathbf G(\mathbf x)\mathbf R,\ 
\mathbf A(\mathbf R\mathbf x) = \mathbf R\mathbf A(\mathbf x), 
\quad V(\mathbf R\mathbf x) = V(\mathbf x), \quad \ \  
\textup{a.a.}\  \ \mathbf x \in \R^3.
\end{equation}
Obviously the symmetry condition for $\mathbf A$ is equivalent 
to that for $\mathbf a$.

The next theorem constitutes the main result of the paper. 

\begin{theorem}\label{t1}
Assume that the matrix $\mathbf G$, the potentials 
$\mathbf A, V$ satisfy the conditions \eqref{g:eq}, \eqref{lip:eq} and  
\eqref{01} with $p > 3$. Assume also that  
\eqref{11}, \eqref{02}, \eqref{13} and \eqref{even:eq} are satisfied. 
Then the spectrum of the operator \eqref{03} is absolutely continuous.  
\end{theorem}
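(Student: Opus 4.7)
The plan is to combine the magnetic Floquet-Bloch decomposition (made possible by the integer flux condition) with L.~Thomas's analytic continuation argument, adapting Friedlander's reflection trick from \cite{Fr} and \cite{FT} to the presence of a nontrivial constant magnetic field.

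\emph{Bloch decomposition and Thomas reduction.} The integer flux \eqref{13} makes the magnetic translations $T_{\mathbf n} u(\mathbf x)=e^{2\pi i b n_2 x_1}u(\mathbf x-2\pi\mathbf n)$, $\mathbf n\in\Z^3$, pairwise commuting and commuting with $H$. Hence $H$ is unitarily equivalent to a direct integral $\int^{\oplus}H(\mathbf k)\,d\mathbf k$ over the dual torus whose fibers, after the usual twist $u\mapsto e^{-i\mathbf k\cdot\mathbf x}u$, act on periodic functions on $\mathbb T^3$ and formally equal $\langle(-i\n-\mathbf A+\mathbf k),\mathbf G(-i\n-\mathbf A+\mathbf k)\rangle + V$. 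Absolute continuity of $H$ reduces to showing that, for each $\lambda\in\R$, the set $\{\mathbf k:\lambda\in\spec H(\mathbf k)\}$ has measure zero, and by analyticity of the fiber family this follows if we can exhibit a complex $\mathbf k$ at which $H(\mathbf k)-\lambda$ is boundedly invertible. I would take $\mathbf k = \mathbf k_0 + i\tau\mathbf e_3$ with real $\mathbf k_0 = (k_1, k_2, 0)$ and large $\tau>0$.

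\emph{Reflection splitting.} For $\mathbf k_0$ of this form, condition \eqref{even:eq} guarantees that $H(\mathbf k_0)$ commutes with a natural lift $\tilde J$ of the reflection $J$ to the fiber (both the constant potential $\mathbf a_0=(-bx_2,0,0)$ and the condition $k_3=0$ are used here), and therefore splits into $\tilde J$-even and $\tilde J$-odd parts $H_\pm(\mathbf k_0)$. Each part is unitarily equivalent to a boundary value problem on the half-cell $\mathbb T^2\times[0,\pi]$ with Neumann (respectively Dirichlet) conditions at $x_3 = 0,\pi$. The Thomas complex shift is then performed on this reduced problem.

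\emph{Coercivity estimate and main obstacle.} Writing $P_j = -i\partial_j - A_j + k_j$, the complex shift $k_3\mapsto i\tau$ produces in the shifted quadratic form an imaginary contribution $2\tau\,\re\sum_{j}\langle g_{3j} P_j u, u\rangle$ and a negative real contribution $-\tau^2\langle g_{33} u, u\rangle$. The aim is to derive
\[
\|(H_\pm(\mathbf k_0+i\tau\mathbf e_3) - \lambda)u\|\ge c(\tau)\|u\|,\qquad c(\tau)\to\infty,
\]
by integration by parts in $x_3$, whose boundary terms vanish thanks to the choice of symmetry subspace. I expect the principal difficulty to be the control of the cross-terms $g_{3j} P_j$ for $j=1,2$, since $P_1$ contains the linearly growing potential $-bx_2$ coming from $\mathbf a_0$. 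Absorbing these into the dominant $\tau^2\langle g_{33} u, u\rangle$ should be possible thanks to the Lipschitz bound \eqref{lip:eq} on $\mathbf G$, a commutator estimate in the spirit of \cite{FT}, and the boundedness of $x_2$ on the fundamental cell (beyond which the magnetic translation invariance takes over). Once such coercivity is in hand, the standard Thomas contradiction completes the proof.
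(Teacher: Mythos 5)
Your reduction skeleton (magnetic translations, direct integral over the dual torus, ``measure zero for each $\lambda$'' via analyticity in one component of $\mathbf k$) matches the paper's, which reduces Theorem \ref{t1} to the absence of eigenvalues and then to the finiteness of an exceptional set via the analytic Fredholm alternative. But the core of your argument --- the Thomas coercivity estimate at $\mathbf k_0+i\tau\mathbf e_3$ --- is exactly the step that is not known to work for a genuinely variable matrix $\mathbf G$ in dimension $3$, and the paper deliberately avoids it. The cross-term you flag, $2\tau\,\re\sum_{j}\langle g_{3j}P_ju,u\rangle$ with $j=1,2$, is not a technicality to be ``absorbed'': on the states relevant for the contradiction one has $\|P_ju\|\sim\tau\|u\|$, so this term is of the same order $\tau^2\|u\|^2$ as the would-be dominant term $-\tau^2\langle g_{33}u,u\rangle$, with no small parameter. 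For constant $\mathbf G$ one kills $g_{3j}$ by a linear change of coordinates, and for $d=2$ by isothermal coordinates; for variable $\mathbf G$ with $d=3$ neither is available, which is precisely why the introduction describes this case as ``unassailable'' by the traditional methods and why Friedlander's reflection trick was invented. The Lipschitz condition \eqref{lip:eq} does not rescue the estimate --- in the actual proof it is used only for the unique continuation principle (Lemma \ref{l54}), not for any coercivity.

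Your use of the reflection symmetry is also structurally different from what is needed, and I do not see how to make it work. Splitting $H(\mathbf k_0)$ at $k_3=0$ into $\tilde J$-even and $\tilde J$-odd parts produces two fixed operators on a half-cell; there is no residual $k_3$-family left to shift, and reinstating $k_3=i\tau$ destroys the commutation with $\tilde J$ (reflection sends $k_3$ to $-k_3$). The way the symmetry actually enters in the paper is via the boundary value problem \eqref{per3:eq}--\eqref{24} on the full cell with multiplier $\zeta=e^{2\pi i k}$ on the faces $\Lambda_\pm$: a solution is written as $\phi+\zeta J\phi+\omega$ with $\phi$ in the solution space $Z$ (Lemma \ref{l41}), which converts the quadratic dependence on $\zeta$ into the \emph{linear} pencil $t_0+z\,t_1$, $z=(1+\zeta^2)/(2\zeta)$, for the Dirichlet--Neumann forms on $Z$. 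Finiteness of the exceptional set then follows from the abstract pencil Lemma \ref{l61}, using that $t_0$ has only finitely many nonpositive directions and that $\ker t_1=\{0\}$ (Theorem \ref{t52}, proved by unique continuation). If you want to salvage your proposal, you would have to either prove the $\tau^2$-coercivity with the $g_{3j}$ cross-terms present --- which would in fact solve the general (non-symmetric) problem --- or replace the Thomas step by a pencil argument of this kind.
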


Throughout the paper we always assume the periodicity 
\eqref{02}. As a special case this allows a constant magnetic field 
i.e. $\mathbf a = 0$. 
With regard to the regularity, 
we normally need only \eqref{g:eq} and \eqref{01} with $p = 3$. 
The assumptions \eqref{lip:eq} and $p >3$ are required  
only once when employing the 
unique continuation argument, see Lemma \ref{l54}. Recall that if 
\eqref{lip:eq} is not satisfied, the spectrum may not be 
absolutely continuous, see \cite{F}.

Note that the condition \eqref{13} can be replaced by $2\pi b\in \Q$. 
This case reduces to that of an 
integer flux by taking an appropriate sublattice of $\Z^3$ and rescaling.  
If the flux is irrational we cannot say anything about the nature of the spectrum.

As mentioned earlier, one can state a theorem similar to 
Theorem \ref{t1} 
in the two-dimensional case as well. 
However, in this case the reflection symmetry would 
imply that $b = 0$, see \eqref{11}, and hence such a theorem 
would not say anything new compared to the known results. 

Theorem \ref{t1} can be conceivably generalized to 
 arbitrary dimensions $d \ge 3$ with the standard 
changes to the conditions \eqref{01}. 
We have chosen not to clutter the presentation with these details but 
to focus on the 
lowest dimension where the reflection symmetry leads 
to  a non-trivial effect. 

As mentioned earlier, L. Friedlander (see \cite{Fr}) was the first to notice 
how the reflection symmetry can be used to establish the absolute continuity of $H$.  
We follow the paper \cite{FT} where Friedlander's scheme was implemented 
with relaxed regularity assumptions. On the other hand our proof is simpler and somewhat shorter 
than that of \cite{FT}, and we consider it worthy of dissemination. 

{\bf Acknowledgments.} This work was supported 
by the grant RFBR 11-01-00458 (N.D.F.) and by the EPSRC grant EP/J016829/1 (A.V.S.).
%%%%%%%%%%%%%%%%%%%%%%%%%%%%%%%%%%%%%%%%%%%%%%%%%%%%%%%%%%%%%%%%%%%%%%%%%%

\section{Floquet-Bloch-Gelfand transformation}
\label{s2}

Denote by $\O$ the interior of the standard 
fundamental domain of the lattice $\Gamma = \mathbb Z^3$: 
$\O = (-\pi,\pi)^3$. We also need separate 
notation for the top and bottom faces of this cube:
\begin{equation*}
\Lambda_\pm = \{\mathbf x\in \R^3: 
\hat{\mathbf x}\in (-\pi, \pi)^2, \ x_3 = \pm \pi\},\ \hat{\mathbf x} = (x_1, x_2).
\end{equation*}
The interior of the fundamental domain 
of the dual lattice is denoted $\O^\dagger = (0, 1)^3$. 

The Floquet-Bloch-Gelfand transform is defined as the operator 
$$
(Uf) (\mathbf x, \mathbf k) = \sum_{\mathbf n\in\Z^3} 
e^{-i\mathbf k\cdot (\mathbf x+2\pi \mathbf n)}
e^{i2\pi b n_2 x_1} f(\mathbf x+2\pi \mathbf n),\ 
\mathbf x\in \O, \mathbf k\in \O^\dagger,
$$
for functions $f \in C_0^\infty (\R^3)$. 
It is clear that $Uf\in C^\infty(\overline\O\times\overline{\O^\dagger})$.  
Moreover, the function $v(\ \cdot\ ) = Uf(\ \cdot\ ; \mathbf k)$ is periodic in $x_1$  
(due to the condition \eqref{13}), and in $x_3$: 
\begin{align}
v(-\pi, x_2, x_3) = v(\pi, x_2, x_3),\label{per1:eq} \\
v(x_1, x_2, - \pi) = v(x_1, x_2, \pi).\label{per2:eq} 
\end{align} 
It is quasiperiodic in $x_2$:
\begin{equation}\label{quasi:eq}
v(x_1, \pi, x_3) = e^{-i2\pi bx_1} v(x_1, -\pi, x_3). 
\end{equation}
A direct calculation shows 
that the transform $U$ 
can be extended to $L_2(\R^3)$ as a unitary operator
\[
U : L_2 (\R^3) \to \int_{\O^\dagger}^\oplus L_2(\O) d\mathbf k .
\]
For each $\mathbf z\in\mathbb C^3$ introduce the quadratic form 
\begin{equation}\label{hz:eq}
h(\mathbf z) [v] = \int_\O 
\left(\left\< \mathbf G(\mathbf x)(-i\n + \mathbf z - \mathbf A(x)) v(\mathbf x), 
(-i\n + \overline{\mathbf z} - \mathbf A(\mathbf x)) v(\mathbf x)\right\>
+ V(\mathbf x) |v(\mathbf x)|^2 \right) d\mathbf x. 
\end{equation}
Under the conditions \eqref{g:eq}, \eqref{01} with $p=3$ 
the potentials $\mathbf A$ and $V$ 
induce on $C^\infty(\overline\O)$ a perturbation which is infinitesimally bounded 
by the standard Dirichlet form, and hence 
\begin{equation}\label{eqform:eq}
C_0^{-1}\|v\|_{H^1(\O)}^2\le  |h(\mathbf z)[v] | + C\|v\|_{L_2(\O)}^2
\le  C_0 \|v\|_{H^1(\O)}^2,
\end{equation}
with some positive constants $C = C(\mathbf z)$ 
and $C_0 = C_0(\mathbf z) >1$ uniformly 
in $\mathbf z$ on a compact subset of $\mathbb C^3$. Thus 
\eqref{hz:eq} naturally extends to 
all $v\in H^1(\O)$ as a closed form. 
In order to relate this form to the form \eqref{00} we consider 
\eqref{hz:eq} on a smaller domain. 
It is convenient to introduce a special notation for the function spaces 
with the conditions \eqref{per1:eq} and \eqref{quasi:eq}:
\begin{equation}\label{tilde:eq}
W^1 = \{u\in H^1(\O): \ \textup{$u$ satisfies \eqref{per1:eq} and \eqref{quasi:eq}}\}.
\end{equation}
Now we consider the form \eqref{hz:eq} on the domain 
\begin{equation*}
 D[h(\mathbf z)] = D[h(\mathbf 0)] = 
 \{v\in W^1: v \ \textup{satisfies \eqref{per2:eq}}\}.
\end{equation*}
Clearly the form \eqref{hz:eq} is closed on 
$D[h(\mathbf 0)]$ and analytic (quadratic) in $\mathbf z\in\mathbb C^3$. 
One checks directly that  
\begin{equation}\label{dirform:eq}
h[v] = \int_{\O^\dagger} h(\mathbf k)[(Uv)(\ \cdot\ , \mathbf k)] d\mathbf k.
\end{equation}
for any $v\in D[h(\mathbf 0)]$. 
The form $h(\mathbf z)[v]$ is sectorial, 
i.e. for a suitable number $\gamma = \gamma(\mathbf z)\in\R$, 
\begin{equation*}
\textup{Re} \ h(\mathbf z)[v]\ge -\gamma\|v\|^2,\   
|\textup{Im}\  h(\mathbf z)[v]| 
\le C\bigl(\textup{Re} \ h(\mathbf z)[v] + \gamma\|v\|^2\bigr),\ 
v\in D[h(\mathbf 0)],
\end{equation*}
with some positive constant 
$C = C(\mathbf z)$ uniformly in $\mathbf z$ on a compact subset of $\mathbb C^3$. 
Hence it defines a sectorial operator (m-sectorial in T. Kato's terminology, see 
\cite{K}) 
which we denote by $H(\mathbf z)$. 
As the form $h(\mathbf z)$ 
is compact in $H^1(\O)$ the resolvent of $H(\mathbf z)$ is compact 
whenever it exists.  
For the values $\mathbf k\in\R^3$ the operator $H(\mathbf k)$ is self-adjoint: 
$H(\mathbf k)=H(\mathbf k)^*$. 
In view of \eqref{dirform:eq} the following unitary equivalence
\begin{equation}
\label{21}
U H U^* = \int_{\O^\dagger}^\oplus H(\mathbf k)\, d\mathbf k 
\end{equation}
holds. Although this formula requires only the values $\mathbf k\in\O^\dagger$ 
it is important for us to have the operator 
$H(\mathbf z)$ defined for $\mathbf z\in\mathbb C^3$. Sometimes 
we use the notation $\mathbf z = (\hat{\mathbf z}, z_3)$, with 
$\hat{\mathbf z} = (z_1, z_2)$. 

In order to prove Theorem \ref{t1} it is sufficient to show 
that $H$ has no eigenvalues, see \cite{FilSob}. 
The proof of this fact reduces to the analysis of the 
following boundary value problem for a function $u\in W^1$ with the form 
$h_0 = h(\mathbf 0)$ and a number $\ze\in\mathbb C$: 
\begin{equation}\label{per3:eq}
u\in W^1, \quad \left.u \right|_{\Lambda_+} = \zeta \left.u\right|_{\Lambda_-},
\end{equation}
\begin{equation}\label{24}
h_0[u, w] = 0,\ \quad \forall w\in W^1, \ \ \textup{s.t.}\ \ 
\overline{\zeta} \left.w\right|_{\Lambda_+} =  \left.w\right|_{\Lambda_-}.
\end{equation}
Theorem \ref{t1} is derived from the next theorem:

\begin{theorem}\label{t2} 
Suppose that $\mathbf G$, $\mathbf A$ and $V$ satisfy the conditions of Theorem \ref{t1}.  
Let $X\subset\mathbb C$ be the subset of the complex plane consisting of 
the points $\ze$ such that
\begin{enumerate}
\item
$\im \ze \neq 0$, $|\ze| \neq 1$,
\item
there exists at least one function $u\in W^1, u\not\equiv 0$  
satisfying \eqref{per3:eq} and \eqref{24}.
\end{enumerate}
Then $X$ is at most finite. 
\end{theorem}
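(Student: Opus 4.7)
The plan is to convert \eqref{per3:eq}--\eqref{24} into an eigenvalue problem for the analytic family $\{H((0,0,z))\}_{z\in\C}$, and then combine analytic Fredholm theory, the $\Z$-periodicity of this family, and a form estimate to conclude finiteness.

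\textbf{Reduction.} For $\zeta\in\C\setminus\{0\}$ choose $z\in\C$ with $\zeta=e^{2\pi iz}$ and substitute $u(\mathbf x)=e^{izx_3}\tilde u(\mathbf x)$, $w(\mathbf x)=e^{i\bar zx_3}\tilde w(\mathbf x)$. The quasi-periodic condition \eqref{per3:eq} on $u$ and its dual in \eqref{24} reduce to $\tilde u,\tilde w\in D[h(\mathbf 0)]$, and a direct calculation (the exponential factors cancel once the shift $z\mathbf e_3$ is incorporated into the covariant derivatives) yields $h_0[u,w]=h((0,0,z))[\tilde u,\tilde w]$. Hence $\zeta\in X$ iff $\ker H((0,0,z))\ne\{0\}$ for a $z$ with $\im z\ne 0$ and $\re z\notin\frac{1}{2}\Z$. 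The family $z\mapsto H((0,0,z))$ is analytic, each operator is m-sectorial with compact resolvent (Sobolev embedding $W^1\hookrightarrow L_2(\O)$), and the analytic Fredholm theorem applied to $C_0(H((0,0,z))+C_0)^{-1}$ for $C_0$ large shows that the set $S:=\{z\in\C:\ker H((0,0,z))\ne\{0\}\}$ is either all of $\C$ or closed discrete. The gauge $v\mapsto e^{-ix_3}v$ gives $H((0,0,z+1))\simeq H((0,0,z))$ unitarily, so $S$ is $\Z$-periodic and its image $\tilde S\subset\C\setminus\{0\}$ under $\zeta=e^{2\pi iz}$ is discrete, with accumulation possible only at $0$ and $\infty$.

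\textbf{Form estimate (main obstacle).} The reflection \eqref{J:eq} combined with \eqref{even:eq} gives $H((0,0,z))J=JH((0,0,-z))$, hence $S=-S$; self-conjugacy $H((0,0,\bar z))=H((0,0,z))^*$ gives $S=\overline S$. Writing $z=x+iy$ and expanding
\[
h((0,0,z))[v]=A(v)+z\alpha(v)+z^2B(v)+D(v),
\]
with $A=\int_\O\<GPv,Pv\>$, $B=\int_\O g_{33}|v|^2\ge c\|v\|^2$, $\alpha=2\re\int_\O\<G\mathbf e_3 v,Pv\>\in\R$ and $D=\int_\O V|v|^2$, any unit kernel vector $v$ satisfies (for $y\ne 0$) the two identities $\alpha=-2xB$ and $A+D=|z|^2B$. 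Combining these with the symmetries $S=-S$, $S=\overline S$ and a Combes-Thomas-type form bound (using the Lipschitz regularity \eqref{lip:eq}), one shows that there exists $R$ such that every $z\in S$ with $|\im z|\ge R$ satisfies $\re z\in\frac{1}{2}\Z$. Consequently each $\zeta\in X$ has preimage $z$ with $|\im z|\le R$, i.e.\ $|\zeta|\in[e^{-2\pi R},e^{2\pi R}]$, and $X\subset\tilde S$ is a discrete subset of this compact annulus, hence finite.

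The true technical heart is the passage from the elementary identities $\alpha=-2xB$ and $A+D=|z|^2B$—together with the two symmetries of $S$—to the quantitative constraint $\re z\in\frac{1}{2}\Z$ for $|\im z|\ge R$. This is where a careful Combes-Thomas-style analysis of how the eigenvalue branches of $H((0,0,z))$ evolve along vertical lines in the $z$-plane (and hence of how $\tilde S$ approaches the punctures $0,\infty$ of $\C\setminus\{0\}$) must be carried out, and is the point at which the reflection hypothesis \eqref{even:eq} enters in an essential way.
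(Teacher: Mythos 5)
There is a genuine gap, and it sits exactly where you place your ``true technical heart.'' Your reduction to the quadratic analytic pencil $z\mapsto H((0,0,z))$ is correct as far as it goes (it is the same substitution the paper uses, in the opposite direction, to derive Theorem \ref{t1} from Theorem \ref{t2}), and analytic Fredholm theory together with the $\Z$-periodicity does show that the relevant set is either all of $\C$ or discrete, so that $X$ can only accumulate at $\zeta=0$ and $\zeta=\infty$. But neither of the two remaining steps is actually proved. First, excluding the ``all of $\C$'' alternative requires exhibiting a single $z$ with $\ker H((0,0,z))=\{0\}$, which for a quadratic pencil with only real self-adjoint fibres is not automatic and is essentially the content of the theorem. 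Second, and more seriously, the finiteness hinges entirely on your claim that every $z\in S$ with $|\im z|\ge R$ has $\re z\in\tfrac12\Z$; this is asserted, not established. The elementary identities you derive give no such control: for a unit kernel vector they yield $|z|^2 B=A+D$ with $A=\int_\O\langle\mathbf GPv,Pv\rangle$ unbounded above over unit vectors, so $|z|$ is not constrained, and the set symmetries $S=-S$, $S=\overline S$ say nothing about individual points. A ``Combes--Thomas-type'' bound for this non-self-adjoint quadratic pencil is precisely what is not available; indeed the whole point of Friedlander's method is that the quadratic pencil cannot be handled directly.

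The paper circumvents this by linearizing the pencil via the reflection symmetry. Any solution of \eqref{per3:eq}, \eqref{24} is written (Lemma \ref{l41}) as $u=\phi+\zeta J\phi+\omega$ with $\phi$ in the solution space $Z$ and $\omega\in N$; testing against $w=J\psi+\overline\zeta\psi$ turns \eqref{24} into the \emph{linear} Hermitian pencil equation $t_0[\phi,\psi]+z\,t_1[\phi,\psi]=0$ on $Z$ with $z=(1+\zeta^2)/(2\zeta)$, $\im z\neq0$. Finiteness then follows from the abstract Lemma \ref{l61}, whose hypotheses are supplied by $\ker t_1=\{0\}$ (Theorem \ref{t52}, proved by unique continuation --- this is where \eqref{lip:eq} and $p>3$ enter) and by the finite dimensionality of the negative subspace of $t_0$ (from the discreteness of the spectrum of the form $h_0$ on $H^1(\O)$). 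If you want to complete a proof along your lines you would in effect have to rediscover this linearization; as written, the proposal does not prove the theorem.
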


\begin{proof}[Derivation of Theorem \ref{t1} from Theorem \ref{t2}] 
 Use Theorem \ref{t2} with the potentials 
$\mathbf A - (\hat{\mathbf k}, 0)$ and $V - \lambda$, where
$\hat{\mathbf k}\in (0,1)^2$ and  $\lambda\in \R$. Clearly 
these new potentials satisfy the conditions of Theorem \ref{t1} as well. 
Let $u$ be a non-trivial solution of the problem \eqref{per3:eq} 
\eqref{24} with 
some $\zeta\in X$, so that 
\begin{equation}\label{k:eq}
h\bigl((\hat{\mathbf k}, 0)\bigr)[u, w] = \lambda(u, w). 
\end{equation}
Let $k\in \mathbb C$ be such that $\ze = e^{i2\pi k}$. 
Re-denoting 
\begin{equation*}
v(\mathbf x) = e^{-ik x_3} u (\mathbf x),
\quad \eta(\mathbf x) = e^{-i\overline k x_3} w(\mathbf x) 
\end{equation*}
we reduce \eqref{k:eq} and the boundary conditions \eqref{per3:eq}  
to the following equation for the function $v\in D[h_0]$: 
\begin{equation*}
h\bigl((\hat{\mathbf k}, k)\bigr) 
[v, \eta] = \lambda(v, \eta), \quad \forall \eta \in D[h_0].
\end{equation*}
This means that $\lambda$ is an eigenvalue of $H(\hat{\mathbf k}, k)$ for all 
$k$ such that $\exp(i2\pi k)\in X$.
Since $H(\mathbf z)$ has compact resolvent, by the  
 analytic Fredholm alternative (see \cite{K}, Theorems VII.1.10, VII.1.9), 
 the finiteness of the set $X$ implies that 
the measure of the set 
\begin{equation*}
\{k\in (0, 1): \lambda\in \sigma(H(\hat{\mathbf k}, k)) \}
\end{equation*}
equals zero for any $\hat{\mathbf k}\in (0, 1)^2$.  
Consequently the measure of the set 
\begin{equation*}
\{\mathbf k\in \O^\dagger: \lambda\in \sigma(H({\mathbf k})) \}
\end{equation*}
is also zero. Thus the point $\lambda$ is not an eigenvalue of the operator \eqref{21}. 
Since $\lambda\in\R$ is arbitrary this implies that 
the operator $H$ has no eigenvalues. According to \cite{FilSob} this ensures that 
the spectrum of $H$ is absolutely continuous, as required. 
\end{proof}

%%%%%%%%%%%%%%%%%%%%%%%%%%%%%%%%%%%%%%%%%%%%%%%%%%%%%%%%%%%%%%%%%%%%%%%%%%%%%%%%
\section{Associated boundary-value problem}
\label{s3}

We begin the analysis of the system \eqref{per3:eq}, \eqref{24} 
with introducing the subspaces
\begin{align*}
W^{1,0} = &\ \{ v \in W^1 : 
\left. v \right|_{\Lambda_+} = \left. v \right|_{\Lambda_-} = 0 \},\\[0.2cm]
W^1_+ = &\ \{ u \in W^1 : \left. u \right|_{\Lambda_+} = 0\},
\end{align*}
with the standard $H^1$-inner product. Now define the subspaces 
\begin{equation*}
N = \left\{ v \in W^{1,0} : 
h_0 [v, w] = 0, \ \forall w \in W^{1,0} \right\}, 
\end{equation*}
\begin{equation*}
M = \{ u\in W^1_+ : h_0 [u, v] = 0,
\ \forall v \in N \},
\end{equation*}
and
\begin{equation*}
Z = \{ u \in W^1_+ : h_0 [u,w] = 0, \quad \forall w \in W^{1,0}, \ \
u \perp N \}
\end{equation*}
The subspace $Z$ consists of weak solutions $u\in W^1_+$ 
of the equation $H u = 0$ which are orthogonal 
to $N$. By definition 
of $M$ we automatically have $Z, W^{1, 0}\subset M$.

First of all consider the following 
boundary-value problem. 

\begin{lemma}\label{Z:lem} 
Let the conditions \eqref{g:eq} and  \eqref{01} 
with $p=3$ be satisfied. 
Then for any function $u\in M$ the system 
\begin{equation}
\label{31}
\begin{cases}
h_0 [\phi ,w] = 0, \quad \forall w \in W^{1,0}, \\
\phi - u\in W^{1, 0}. 
\end{cases}
\end{equation}
is solvable for the function $\phi\in W^1_+$. 
The solution is unique under the condition 
$\phi\perp N$.
Moreover, $\dim N <\infty$. 
\end{lemma}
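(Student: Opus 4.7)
Setting $v = \phi - u$ reduces \eqref{31} to: find $v \in W^{1,0}$ with
\begin{equation*}
h_0[v, w] = -h_0[u, w], \qquad \forall\, w \in W^{1,0}.
\end{equation*}
Since $u \in W^1_+$ already vanishes on $\Lambda_+$, the requirement $\phi - u \in W^{1,0}$ is equivalent to $\phi \in W^1_+$ with $\phi|_{\Lambda_-} = u|_{\Lambda_-}$, matching the boundary condition in \eqref{31}. The right-hand side $w \mapsto -h_0[u, w]$ is a bounded antilinear functional on $W^{1,0}$ by \eqref{eqform:eq} and Cauchy--Schwarz.

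Next I would analyse $h_0$ as a closed Hermitian form on $W^{1,0}$. For $\mathbf z = \mathbf 0$ the form is Hermitian (the matrix $\mathbf G$ is real symmetric, and $\mathbf A$, $V$ are real), and \eqref{eqform:eq} combined with sectoriality (equivalently, semiboundedness for a Hermitian form) yields a constant $C_1$ such that
\begin{equation*}
h_0[v] + C_1 \|v\|_{L_2(\O)}^2 \ge c \|v\|_{H^1(\O)}^2, \qquad v \in W^{1,0}.
\end{equation*}
Hence $h_0|_{W^{1,0}}$ generates a self-adjoint operator $H_0^{\mathrm D}$ on $L_2(\O)$. The embedding $W^{1,0} \hookrightarrow L_2(\O)$ is compact by Rellich's theorem, so $H_0^{\mathrm D}$ has compact resolvent and $N = \ker H_0^{\mathrm D}$ is finite-dimensional, which already gives the third assertion of the lemma.

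For existence I would invoke the Fredholm alternative. Fixing $\mu > C_1$, the coercive form $h_0 + \mu (\cdot, \cdot)_{L_2}$ defines an isomorphism $T_\mu : W^{1,0} \to (W^{1,0})^*$ by Lax--Milgram; composing $T_\mu^{-1}$ with the compact embedding $J : W^{1,0} \hookrightarrow L_2(\O) \hookrightarrow (W^{1,0})^*$ yields a compact operator $K = \mu T_\mu^{-1} J$ on $W^{1,0}$, and the reduced equation takes the form $(I - K) v = T_\mu^{-1}(-h_0[u, \cdot])$. By Hermiticity, $\ker(I - K) = \ker(I - K^*) = N$, so solvability is equivalent to $-h_0[u, \cdot]$ annihilating $N$, which is exactly the defining property of $M$. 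Two solutions differ by an element of $N$, and the side condition $\phi \perp N$ in $L_2(\O)$ pins down a unique representative. The main delicate point is simply book-keeping: verifying that the finite-dimensional obstruction produced by the Fredholm alternative for the shifted form on the subspace $W^{1,0}$ coincides with the $N$ defined in the text. Once the coercivity estimate above is in hand, the rest is a routine application of Rellich compactness and Riesz--Schauder.
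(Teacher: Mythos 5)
Your proposal is correct and follows essentially the same route as the paper: reduce to the inhomogeneous problem for $\psi=\phi-u$ on $W^{1,0}$, convert it via the shifted coercive form into a compact perturbation of the identity, apply the Fredholm alternative (identifying the kernel with $N$, so that the solvability condition is exactly $u\in M$), and obtain $\dim N<\infty$ and uniqueness modulo the $L_2$-orthogonality to $N$. The only cosmetic difference is that you phrase the reduction via Lax--Milgram and the dual space, whereas the paper works directly with the inner product $(f,g)_1=h_0[f,g]+\gamma(f,g)$ and a compact self-adjoint operator representing the $L_2$-form.
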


\begin{proof}
The system is studied in the standard way. Namely, 
the function $\psi = \phi - u\in W^{1, 0}$ satisfies 
\begin{equation}\label{hom:eq}
h_0 [\psi, w] = - h_0[u, w], \quad \forall w \in W^{1,0}.
\end{equation}
Referring to \eqref{eqform:eq} 
introduce on $W^{1, 0}$ the inner product 
\begin{equation*}
(f, g)_1 = h_0[f, g] + \gamma (f, g) 
\end{equation*}
choosing $\gamma \ge 0$ in such a way that the induced norm $\|f\|_1$ is 
equivalent to the standard $H^1$-norm. The $L_2$-inner product is an example of 
a symmetric compact form in $H^1$, and hence there is a compact self-adjoint 
operator $T:W^{1, 0}\to W^{1, 0}$ 
such that   $(f, g) = (T f, g)_1, f, g\in W^{1, 0}$. 
As a result, the left-hand side of \eqref{hom:eq} 
rewrites as $((I-\ga T) \psi, w)_1$. 
The right-hand side of \eqref{hom:eq} is a continuous linear functional 
of $w\in W^{1, 0}$ so there is a function $q\in W^{1, 0}$ such that 
$- h_0[u, w] = (q, w)_1,\ \|q\|_1\le C \|u\|_1$. 
Thus \eqref{hom:eq} takes the form
\begin{equation}\label{fred:eq}
\psi - \gamma T \psi = q.
\end{equation}
Now it follows from the classical Fredholm Theory that \eqref{fred:eq} 
has a solution $\psi\in W^{1, 0}$ if and only if $(q, v)_1 = 0$
for all $v\in \ker (I-\ga T)$. 
Under this condition there is a unique solution $\psi_0$ 
satisfying the property $(\psi_0, v)_1 = 0$ 
for all $v\in \ker(I-\ga T)$, and this solution satisfies 
the bound $\|\psi_0\|_1\le C\| q\|_1$. 
Note that $N = \ker (I-\gamma T)$, so by 
definition of $q$ the equality $(q, v)_1 = 0$, 
$\forall v\in \ker (I-\gamma T)$ follows from the condition 
$u\in M$. Thus \eqref{hom:eq} is 
solvable and hence so is \eqref{31}. 
As $T$ is compact, it immediately follows that $\dim N < \infty$. 

Denote $\phi_0 = \psi_0 + u\in W^1_+$. Any other solution of \eqref{31} 
has the form  $\phi = \phi_0 + w$ with a suitable $w\in N$. 
If one demands that $\phi \perp N$ then $w = -\mathcal P \phi_0$ 
where $\mathcal P$ is the projection in $L_2(\O)$ on the 
finite-dimensional subspace $N$. Therefore such a solution $\phi\in W^1_+$ 
is uniquely defined, as required.  
\end{proof}
 
The following elementary lemma is 
crucial for us.

\begin{lemma}\label{l31} 
Let the conditions 
\eqref{g:eq}, \eqref{01} with $p =3$ be satisfied.  
Let the subspaces $M, Z$ be as defined above. 
Then the subspace $Z$ is non-trivial, and 
\begin{equation*}
M = Z \dot +\, W^{1,0}.
\end{equation*}
In other words, any function $u\in M$ is uniquely represented 
as the sum $\phi+w$ with some $\phi\in Z$ and $w\in W^{1, 0}$.
\end{lemma}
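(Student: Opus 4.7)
The plan is to check the elementary inclusions, use Lemma \ref{Z:lem} to construct the decomposition, and then argue nontriviality of $Z$ by a codimension count against an infinite-dimensional trace space.

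\emph{Step 1: basic inclusions.} I would first record that $W^{1,0}\subset M$, $Z\subset M$, and $W^{1,0}\cap Z=\{0\}$. The inclusion $W^{1,0}\subset M$ uses Hermitian symmetry of $h_0$ (which holds at $\mathbf z=\mathbf 0$ because $\mathbf G$ is real symmetric and $V$ is real): for $u\in W^{1,0}$ and $v\in N\subset W^{1,0}$, one has $h_0[u,v]=\overline{h_0[v,u]}=0$. The inclusion $Z\subset M$ is immediate from $N\subset W^{1,0}$. For the intersection, any $u\in W^{1,0}\cap Z$ satisfies $h_0[u,w]=0$ for every $w\in W^{1,0}$, hence $u\in N$, but also $u\perp N$ in $L_2(\O)$, so $u=0$.

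\emph{Step 2: decomposition.} Given $u\in M$, apply Lemma \ref{Z:lem} to obtain the unique $\phi\in W^1_+$ with $\phi-u\in W^{1,0}$, $\phi\perp N$, and $h_0[\phi,w]=0$ for all $w\in W^{1,0}$. By the very definition of $Z$ this function lies in $Z$, and setting $\eta:=u-\phi\in W^{1,0}$ produces the decomposition $u=\phi+\eta$. Uniqueness of the decomposition follows at once from $W^{1,0}\cap Z=\{0\}$ proved in Step~1.

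\emph{Step 3: nontriviality of $Z$.} The only nonroutine point is exhibiting a nonzero element of $Z$. I would proceed by a codimension argument. The linear map $W^1_+\to N^*$ given by $u\mapsto\bigl(v\mapsto h_0[u,v]\bigr)$ has $M$ as its kernel, so by the finiteness of $\dim N$ furnished by Lemma \ref{Z:lem},
\[
\dim(W^1_+/M)\le\dim N<\infty.
\]
On the other hand, the Dirichlet trace $u\mapsto u|_{\Lambda_-}$ on $W^1_+$ has kernel $W^{1,0}$ and infinite-dimensional range inside the appropriate (quasi)periodic subspace of $H^{1/2}(\Lambda_-)$, so $\dim(W^1_+/W^{1,0})=\infty$. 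Since $W^{1,0}\subset M\subset W^1_+$, comparing the two codimensions forces $M\setminus W^{1,0}\ne\emptyset$. Picking any $u\in M$ with $u|_{\Lambda_-}\ne 0$ and running Step~2, the resulting $\phi\in Z$ cannot vanish, for otherwise $u=-\eta\in W^{1,0}$, contradicting $u|_{\Lambda_-}\ne 0$.

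The genuine content of the proof is the codimension comparison in Step~3: it leverages the finiteness $\dim N<\infty$ from Lemma \ref{Z:lem} against the infinite-dimensionality of the trace space on $\Lambda_-$, and everything else is bookkeeping built on the decomposition supplied by Lemma \ref{Z:lem}.
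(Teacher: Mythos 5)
Your argument is correct and follows essentially the same route as the paper: the decomposition is read off from the solvability and uniqueness statement of Lemma \ref{Z:lem}, and the non-triviality of $Z$ comes from comparing the finite codimension of $M$ in $W^1_+$ (controlled by $\dim N<\infty$) with the infinite codimension of $W^{1,0}$. Your Step 3 merely spells out the codimension count that the paper states in one line, and your direct verification of $Z\cap W^{1,0}=\{0\}$ is an equivalent substitute for the paper's appeal to the uniqueness clause of Lemma \ref{Z:lem}.
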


\begin{proof}   
By Lemma \ref{Z:lem} for any function $u\in M$ there is a  
solution $\phi$ of \eqref{31} orthogonal to $N$. 
Furthermore, $\phi$ is uniquely defined and $w = \phi - u\in W^{1, 0}$, 
so $M = Z \dot + \ W^{1, 0}$, as claimed. Recall that 
$\textup{codim} \ M < \infty $ in $W^1_+$ whereas 
$\textup{codim}\ W^{1, 0} = \infty$, so $M\not = W^{1, 0}$. This 
implies that $Z$ is non-trivial. 
\end{proof}

%%%%%%%%%%%%%%%%%%%%%%%%%%%%%%%%%%%%%%%%%%%%%%%%%%%%%%%%%%%%%%%%%%%

\section{The Dirichlet-Neumann forms}

\subsection{General facts} 

On the subspace $Z$ considered as a Hilbert space with the $H^1$-inner product 
introduce the forms 
\begin{equation*}
t_0[u, v] = h_0[u, v], \ \ t_1[u, v] = h_0[u, Jv],\ u, v\in Z, 
\end{equation*}
where $J$ is defined in \eqref{J:eq}. We call $t_0$ and $t_1$ 
the Dirichlet-Neumann forms. 
We list their properties in the following lemma.

\begin{lemma} 
Let the conditions 
\eqref{g:eq}, \eqref{01} with $p = 3$ be satisfied.  
Let $t_0, t_1$ be as defined above. Then 
\begin{enumerate}
\item
Both forms $t_0$ and $t_1$ are bounded on $Z$:
\begin{equation}\label{Mbound:eq}
|t_0 [\phi, \psi]| + |t_1 [\phi, \psi]|\le C\|\phi\|_{H^1}\   \|\psi\|_{H^1}. 
\end{equation}
\item
The form $t_0$ is Hermitian. If the condition \eqref{even:eq} is satisfied then 
$t_1$ is also Hermitian. 
\item
Let $\mathcal L\subset Z$ be a linear set such that  
$t_0[\phi]\le 0$ for all $\phi\in \mathcal L$. Then 
\begin{equation}\label{vari:eq}
\sup_{\mathcal L} \dim \mathcal L <\infty. 
\end{equation}
\end{enumerate}
\end{lemma}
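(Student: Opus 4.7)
The plan is to handle the three parts separately; only part (3) is substantive, and parts (1) and (2) are algebraic consequences of what has already been set up. For part (1), polarizing the upper bound in \eqref{eqform:eq} at $\mathbf z = \mathbf 0$ gives the sesquilinear estimate $|h_0[u,v]|\le C\|u\|_{H^1}\|v\|_{H^1}$ on $H^1(\O)$, which bounds $t_0$, and since $\mathbf R$ is orthogonal and $\O$ is $\mathbf R$-invariant the reflection $J$ of \eqref{J:eq} is an isometry of both $L_2(\O)$ and $H^1(\O)$, so the same bound controls $t_1[u,v]=h_0[u,Jv]$. Hermiticity of $t_0$ in part (2) is immediate from the reality of $\mathbf G$, $\mathbf A$, $V$, together with the symmetry of $\mathbf G$.

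For the Hermiticity of $t_1$, the plan is to prove the identity $h_0[u,Jv]=h_0[Ju,v]$ under \eqref{even:eq}; combined with the Hermiticity of $h_0$ this yields $\overline{t_1[v,u]} = \overline{h_0[v,Ju]} = h_0[Ju,v] = h_0[u,Jv] = t_1[u,v]$. A direct differentiation, using $\mathbf A(\mathbf x) = \mathbf R\mathbf A(\mathbf R\mathbf x)$ from \eqref{even:eq}, produces
\[
(-i\n - \mathbf A(\mathbf x))(Jv)(\mathbf x) = \mathbf R\bigl[(-i\n - \mathbf A)v\bigr](\mathbf R\mathbf x).
\]
Substituting this into $h_0[u,Jv]$ and then changing variables $\mathbf y = \mathbf R\mathbf x$, the integrand turns into that of $h_0[Ju,v]$, thanks to the remaining symmetries $\mathbf G(\mathbf R\mathbf y) = \mathbf R\mathbf G(\mathbf y)\mathbf R$ and $V(\mathbf R\mathbf y) = V(\mathbf y)$, together with the orthogonality identity $\<\mathbf R\mathbf a, \mathbf R\mathbf b\> = \<\mathbf a, \mathbf b\>$.

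For part (3), my strategy is to enlarge the domain and use spectral theory. The form $h_0$ extends to the larger space $W^1_+$, where it is closed (by \eqref{eqform:eq}), Hermitian, and semibounded below (by the sectoriality recorded after \eqref{eqform:eq}), hence defines a self-adjoint operator $H_+$ in $L_2(\O)$. The compact Rellich embedding $H^1(\O)\hookrightarrow L_2(\O)$ forces $H_+$ to have compact resolvent, so its spectrum is purely discrete with $+\infty$ the only accumulation point; in particular only finitely many eigenvalues $\la_1\le\dots\le\la_N$ are non-positive. Since $Z\subset W^1_+$, the min-max principle immediately gives $\dim\mathcal L\le N$ for every subspace $\mathcal L\subset Z$ on which $t_0=h_0\le 0$, which is \eqref{vari:eq}. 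The main obstacle I anticipate is the algebraic identity in part (2) for $t_1$: the three symmetry relations of \eqref{even:eq} must mesh with the orthogonality of $\mathbf R$ in exactly the right way to intertwine $J$ with the magnetic momentum operator, and this is the only place where the reflection symmetry hypothesis is used.
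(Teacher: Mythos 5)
Your proposal is correct and follows essentially the same route as the paper: part (1) from \eqref{eqform:eq} plus the fact that $J$ is an $H^1$-isometry, part (2) via the intertwining identity $h_0[u,Jv]=h_0[Ju,v]$ (which the paper obtains from the stated symmetry $h_0[Ju,Jv]=h_0[u,v]$, and which you verify by the same change of variables $\mathbf y=\mathbf R\mathbf x$), and part (3) by extending $h_0$ to a closed semibounded form on a larger closed subspace, invoking compactness of the embedding $H^1(\O)\hookrightarrow L_2(\O)$ and the min--max (Glazman) characterization of the finitely many non-positive eigenvalues. The only cosmetic difference is that you restrict to $W^1_+$ where the paper uses all of $H^1(\O)$; either choice works since $Z$ sits inside both.
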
 

\begin{proof}
The bound \eqref{Mbound:eq} immediately 
follows from \eqref{eqform:eq}. 

The form $t_0$ is clearly Hermitian. If \eqref{even:eq} is satisfied, then 
\begin{equation*}
t_1 [u, v] = h_0 [u, J v] =
h_0 [J u, v] = \overline{t_1 [v, u]},
\quad \forall u, v \in Z,
\end{equation*}
i.e. $t_1$ is Hermitian. 

Consider the  
form $h_0$ on $H^1(\O)$, and recall that by \eqref{eqform:eq} 
with $\mathbf z = \mathbf 0$ it is closed and semibounded from below. 
Moreover, $H^1(\O)$ embeds into $L_2(\O)$ compactly, and hence 
the associated self-adjoint operator has discrete spectrum  accumulating at $+\infty$. 
The number of eigenvalues $n(\lambda)$ 
which are less than or equal to 
an arbitrary number $\lambda\in\R$ 
can be found in terms of the form $h_0$ in the standard way. 
Precisely, let $\mathcal L_{\lambda}\subset H^1(\O)$ be a linear set such that 
$h_0[u] \le \lambda\|u\|^2$ for all $u\in\mathcal L_\lambda$. 
Then 
\begin{equation*}
n(\lambda) = \max_{\mathcal L_\lambda}\dim \mathcal L_\lambda <\infty,
\end{equation*}
see \cite{BS}, Ch. 10, Theorem 2.3.
The form $t_0$ is the restriction of $h_0$ to the subspace $Z$, and hence 
\eqref{vari:eq} is a direct consequence of the above bound with $\lambda = 0$.   
\end{proof}

Instead of the solution space $Z$ we could have 
 considered the spaces of traces on the faces $\Lambda_-$, $\Lambda_+$. 
Then the forms $t_0$ and $t_1$ would correspond to 
two Dirichlet-Neumann operators $T_0$ and $T_1$ 
which map 
 the trace $\left.\phi\right|_{\Lambda_-},\ \phi\in Z$, into   
 the normal derivative of $\phi$ on the faces $\Lambda_-$ and $\Lambda_+$
respectively.  
This approach was adopted in the paper \cite{Fr}. 
We do not make explicit use of the Dirichlet-Neumann operators but it seems appropriate to use 
this terminology for the forms $t_0, t_1$.

\subsection{Reflection symmetry}

From now we assume that $\mathbf G$, $\mathbf A, V$ satisfy the symmetry condition 
\eqref{even:eq}. Thus using the operator $J$ defined in \eqref{J:eq} 
we get 
$$
h_0 [Ju, Jv] = h_0 [u, v],  
\quad \forall u, v \in H^1 (\O).
$$
 
Another consequence of the symmetry is that $J N = N$.

The next property is crucial for our argument. 

\begin{theorem}\label{t52}
Let the conditions 
\eqref{g:eq}, \eqref{lip:eq} and  \eqref{01} with $p > 3$ be satisfied.  
 Denote
\begin{equation}\label{kert1:eq}
\ker t_1 \equiv \left\{ u \in Z :
t_1 [u, v] = 0,  \ \forall v \in Z \right\}.
\end{equation}
If \eqref{even:eq} is satisfied then $\ker t_1 = \{0\}$. 
\end{theorem}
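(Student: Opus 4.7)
My plan is to show that for any $u\in\ker t_1$, the weak conormal trace of $u$ on $\Lambda_+$ lies in a finite-dimensional subspace determined by $N$. After subtracting a suitable element of $N$, one obtains a function with both vanishing Dirichlet and vanishing weak Neumann data on $\Lambda_+$; extending it by zero across $\Lambda_+$ yields a weak $H^1$-solution on an open neighbourhood of $\Lambda_+$, so the unique continuation result (Lemma \ref{l54}) forces $u$ itself to lie in $N$, and the orthogonality $u\perp_{L_2}N$ built into membership in $Z$ then concludes $u=0$.

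Concretely, fix $u\in\ker t_1$ and write $W^1_-:=\{\phi\in W^1:\phi|_{\Lambda_-}=0\}$. For any $\phi\in W^1_-$ with $J\phi\in M$, Lemma \ref{l31} produces a decomposition $J\phi=v+w$ with $v\in Z$ and $w\in W^{1,0}$; since $J$ preserves $W^{1,0}$, we get $\phi=Jv+Jw$ and therefore
\begin{equation*}
h_0[u,\phi]=h_0[u,Jv]+h_0[u,Jw]=t_1[u,v]+0=0,
\end{equation*}
using $u\in\ker t_1$ and $u\in Z$. Since $h_0[u,\cdot]$ already vanishes on $W^{1,0}$, the functional $\phi\mapsto h_0[u,\phi]$ on $W^1_-$ depends only on the trace $\phi|_{\Lambda_+}$ and defines a weak conormal trace $\tau_+(u)$ on $\Lambda_+$. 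By the $J$-invariance of $h_0$ and $JN=N$, the requirement $J\phi\in M$ is equivalent to $h_0[\phi,n]=0$ for every $n\in N$; integrating by parts against the Dirichlet zero-modes $n$ reformulates this as $\phi|_{\Lambda_+}\perp_{L_2}\{\partial_\nu n|_{\Lambda_-}:n\in N\}$. Hence $\tau_+(u)$ lies in this finite-dimensional span, which by the reflection identity $\partial_\nu(Jn)|_{\Lambda_+}=\partial_\nu n|_{\Lambda_-}$ combined with $JN=N$ equals $\{\partial_\nu m|_{\Lambda_+}:m\in N\}$. Thus there is $\tilde n\in N$ with $\tau_+(u)=\partial_\nu\tilde n|_{\Lambda_+}$.

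Set $u':=u-\tilde n$. Since $\tilde n\in N\subset W^{1,0}$, we still have $u'\in W^1_+$, $h_0[u',w]=0$ for every $w\in W^{1,0}$, $u'|_{\Lambda_+}=0$, and now also $\tau_+(u')=0$. Extending $u'$ by zero to $\tilde\Omega:=(-\pi,\pi)^2\times(-\pi,\pi+\delta)$ and extending the coefficients by $2\pi$-periodicity, the matching of both Dirichlet and Neumann data on $\Lambda_+$ shows that $\tilde u\in H^1(\tilde\Omega)$ is a weak solution of the elliptic equation on all of $\tilde\Omega$. Since $\tilde u\equiv 0$ on the open set $(-\pi,\pi)^2\times(\pi,\pi+\delta)$, the unique continuation principle of Lemma \ref{l54}, which is exactly where \eqref{lip:eq} and $p>3$ are used, yields $\tilde u\equiv 0$ on $\tilde\Omega$, whence $u=\tilde n\in N$. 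Combined with $u\perp_{L_2}N$ this gives $u=0$.

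The delicate point is the first step: giving rigorous meaning to the weak conormal trace $\tau_+(u)$, and running the integration-by-parts identification of the finite-codimension obstruction coming from $N$ in the low-regularity setting $\mathbf G\in\textup{Lip}$, $\mathbf A\in L_p$. The subsequent algebraic subtraction of $\tilde n$ and the zero-extension are a standard Dirichlet/Neumann matching argument, and the unique continuation is used as a black box.
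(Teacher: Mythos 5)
Your proof has the same skeleton as the paper's: show that the functional $\phi\mapsto h_0[u,\phi]$ vanishes on the finite-codimensional set $\{\phi: J\phi\in M\}$, conclude that it is induced by some element of $N$, subtract that element, apply unique continuation, and finish using $Z\cap N=\{0\}$. Your opening computation (via Lemma \ref{l31} and $JW^{1,0}=W^{1,0}$) and your closing steps are exactly the paper's. The divergence --- and the gap --- is in the middle. The paper gets from ``the functional vanishes on $M=\bigcap_k\ker\ell_k$'' to ``it is a linear combination of the $\ell_k$'' by the purely Hilbert-space Lemma \ref{l53}, applied to the bounded functionals $\ell(\psi)=\overline{h_0[u,J\psi]}$ and $\ell_k(\psi)=\overline{h_0[u_k,J\psi]}$ on $W^1_+$, where $\{u_k\}$ is a basis of $N$; this produces $u_0\in N$ with $h_0[u-u_0,J\psi]=0$ for all $\psi\in W^1_+$, with no trace theory at all. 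You instead pass to a weak conormal trace $\tau_+(u)$ and claim it lies in the $L_2(\Lambda_+)$-span of $\{\partial_\nu m|_{\Lambda_+}: m\in N\}$. Under the standing regularity ($\mathbf G\in\textup{Lip}$, $\mathbf A\in L_{p,\textup{loc}}$) the conormal derivative of an element of $N$ is defined only as a functional on a trace space, not as an $L_2(\Lambda_+)$ function, so the orthogonality ``$\phi|_{\Lambda_+}\perp_{L_2}\partial_\nu n$'' and the pointwise reflection identity for $\partial_\nu$ are not meaningful as written; making them rigorous forces you into exactly the annihilator argument that Lemma \ref{l53} performs on $W^1_+$ directly, so the trace detour buys nothing and is precisely where your argument is incomplete (as you yourself flag). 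Two smaller remarks: the zero-extension of $u'$ is redundant, since $\tau_+(u')=0$ together with $h_0[u',w]=0$ for all $w\in W^{1,0}$ amounts to the hypothesis $h_0[u',Jv]=0$, $\forall v\in W^1_+$, of Lemma \ref{l54}, whose proof already performs that extension; and your final step is sound, since membership in $Z$ includes the constraint $u\perp N$, matching the paper's conclusion via $W^{1,0}\cap Z=\{0\}$.
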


For the proof of this fact we need two lemmas.

\begin{lemma}
\label{l53}
Let ${\mathfrak H}$ be a Hilbert space, and 
$\ell$, $\ell_1, \dots, \ell_n$, $n < \infty$, 
be bounded linear functionals on ${\mathfrak H}$. 
If
\begin{equation}
\label{52}
\bigcap_{k=1}^n \ker \ell_k \subset \ker \ell,
\end{equation}
then the functional $\ell$ is a linear combination of the others:
$\ell = \sum_{k=1}^n \al_k \ell_k$ with some coefficients 
$\alpha_k$, $k= 1, 2, \dots, n$.
\end{lemma}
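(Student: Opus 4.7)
The plan is to reduce this to a finite-dimensional linear-algebra fact by factoring all the functionals through the joint evaluation map into $\mathbb C^n$. This is essentially a quotient argument, and no Hilbert space structure is actually needed beyond having bounded linear functionals — the completeness and inner product play no role.

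First I would introduce the continuous linear map
\begin{equation*}
\Phi : \mathfrak H \to \mathbb C^n, \qquad \Phi(x) = \bigl(\ell_1(x), \ell_2(x), \dots, \ell_n(x)\bigr).
\end{equation*}
By construction $\ker \Phi = \bigcap_{k=1}^n \ker \ell_k$, so the hypothesis \eqref{52} says exactly that $\ker \Phi \subset \ker \ell$. The standard factorization principle for linear maps then gives a linear functional $\tilde \ell$ on the image $\mathrm{Ran}(\Phi) \subset \mathbb C^n$ such that $\ell = \tilde \ell \circ \Phi$: indeed, for $y \in \mathrm{Ran}(\Phi)$ choose any $x$ with $\Phi(x) = y$ and set $\tilde \ell(y) = \ell(x)$; the inclusion $\ker \Phi \subset \ker \ell$ ensures this is well defined.

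Next I would extend $\tilde \ell$ from the subspace $\mathrm{Ran}(\Phi)$ to all of $\mathbb C^n$ as a linear functional (trivial in finite dimensions — just prescribe it arbitrarily on a complementary subspace). Any linear functional on $\mathbb C^n$ has the form $(y_1, \dots, y_n) \mapsto \sum_{k=1}^n \alpha_k y_k$ for some scalars $\alpha_k \in \mathbb C$. Composing with $\Phi$ yields
\begin{equation*}
\ell(x) = \tilde\ell\bigl(\Phi(x)\bigr) = \sum_{k=1}^n \alpha_k \ell_k(x), \quad \forall x \in \mathfrak H,
\end{equation*}
which is the desired representation.

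There is essentially no obstacle here: the lemma is a soft statement whose content is entirely finite-dimensional, and the only thing one has to be careful about is the well-definedness of $\tilde\ell$, which is precisely what the hypothesis \eqref{52} is designed to guarantee. An alternative, slightly more hands-on route is to argue by induction on $n$: after discarding a maximal linearly independent subfamily among $\ell_1, \dots, \ell_n$ (the discarded ones already being linear combinations of the rest), one can use the standard fact that, for linearly independent functionals, there exist biorthogonal vectors $e_j \in \mathfrak H$ with $\ell_k(e_j) = \delta_{jk}$, and then set $\alpha_k = \ell(e_k)$; the difference $\ell - \sum_k \alpha_k \ell_k$ annihilates each $e_k$ and vanishes on $\bigcap_k \ker \ell_k$, hence on all of $\mathfrak H$. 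I would, however, present the factorization proof since it is shorter and more conceptual.
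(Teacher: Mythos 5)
Your argument is correct: the factorization of $\ell$ through the joint evaluation map $\Phi:\mathfrak H\to\mathbb C^n$ is well defined precisely because of the hypothesis \eqref{52}, the extension of $\tilde\ell$ from $\operatorname{Ran}\Phi$ to $\mathbb C^n$ is immediate in finite dimensions, and every linear functional on $\mathbb C^n$ is of the stated coordinate form, so the representation $\ell=\sum_k\alpha_k\ell_k$ follows. The paper takes a different (equally short) route that does use the Hilbert space structure: it invokes the Riesz representation to write $\ell(x)=(x,z)$ and $\ell_k(x)=(x,z_k)$, observes that \eqref{52} says that every $x$ orthogonal to $\mathfrak L=\operatorname{span}\{z_1,\dots,z_n\}$ is orthogonal to $z$, i.e. $z\in(\mathfrak L^\perp)^\perp=\mathfrak L$ (the last equality because a finite-dimensional subspace is closed), and then expands $z$ in the $z_k$. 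The trade-off is exactly the one you identify: your quotient argument needs no inner product or completeness and would prove the lemma for arbitrary linear functionals on an arbitrary vector space, whereas the paper's proof is a one-line appeal to duality that is natural in the Hilbert-space setting in which the lemma is actually applied. Your second, biorthogonal-system sketch is also sound and is essentially a coordinate version of the same factorization. No gaps.
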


Although this fact is elementary we provide a proof for the sake of completeness. 

\begin{proof}
Let $z$, $z_1, \dots, z_n \in {\mathfrak H}$ be the 
uniquely defined vectors such that 
$$
\ell(x) = (x,z), \quad \ell_k(x) = (x, z_k), 
\ \ k = 1, \dots, n, \quad \forall x \in {\mathfrak H} .
$$
The condition \eqref{52} is equivalent to the following 
implication: 
if $ x \perp {\mathfrak L} = \operatorname{span} \{z_1, \dots, z_n\}$, 
then $x \perp z$.
This means that $z \in {\mathfrak L}$, i.e. $z = \sum_{k=1}^n \al_k z_k$ 
with suitable coefficients $\alpha_k$, $k=1, 2, \dots, n$.\end{proof}

\begin{lemma}\label{l54}
Let $\mathbf G$, $\mathbf A$ and $V$ satisfy \eqref{g:eq}, 
\eqref{lip:eq} and \eqref{01} with $p >3$.  
Let a function $w \in H^1 (\O)$ be such that
$\left. w \right|_{\Lambda_+} = 0$ and
$$
h_0 [w, Jv] = 0,  \quad \forall v \in W^1_+.
$$
Then $w=0$.
\end{lemma}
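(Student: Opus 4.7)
The plan is to transfer the problem to $\tilde w := Jw$, interpret it as providing zero Cauchy data on $\Lambda_-$ for the elliptic equation $H\tilde w = 0$, and then invoke strong unique continuation in a slab that straddles $\Lambda_-$ and in which the coefficients extend periodically.

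The symmetry identity $h_0[u, Jv] = h_0[Ju, v]$, which holds under \eqref{even:eq} and has already been used in the proof that $t_1$ is Hermitian, rewrites the hypothesis as
\begin{equation*}
h_0[\tilde w, v] = 0, \quad \forall v \in W^1_+,
\end{equation*}
where $\tilde w := Jw \in H^1(\O)$ satisfies $\tilde w|_{\Lambda_-} = w|_{\Lambda_+} = 0$. Testing against arbitrary $v \in C_0^\infty(\O) \subset W^1_+$ yields $H\tilde w = 0$ distributionally in $\O$.

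To upgrade the boundary behaviour to a genuine interior equation I would extend $\tilde w$ by zero across $\Lambda_-$. Fix a small $\delta > 0$ and a subdomain $K' \Subset (-\pi,\pi)^2$, and set $U = K' \times (-\pi - \delta, -\pi + \delta)$. Define
\begin{equation*}
W(\mathbf x) = \tilde w(\mathbf x) \ \text{for}\ x_3 \ge -\pi, \qquad W(\mathbf x) = 0\ \text{for}\ x_3 < -\pi.
\end{equation*}
Because $\tilde w|_{\Lambda_-} = 0$, the extension $W$ belongs to $H^1(U)$. Using the $(2\pi\Z)^3$-periodicity \eqref{02} one extends $\mathbf G,\mathbf a,V$ to all of $\R^3$ preserving their regularity, so that $\mathbf A = (-bx_2,0,0)+\mathbf a$ together with $\mathbf G$ and $V$ is well-defined throughout $U$. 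For any $\varphi \in C_0^\infty(U)$ the restriction $\varphi|_\O$ lies in $W^1_+$: it vanishes near $\Lambda_+$ and near the lateral faces of $\O$, so \eqref{per1:eq} and \eqref{quasi:eq} become trivial identities $0=0$. Since $W \equiv 0$ below $\Lambda_-$,
\begin{equation*}
\int_U \l\< \mathbf G(-i\n - \mathbf A) W, (-i\n - \mathbf A)\varphi\r\> d\mathbf x + \int_U V W \bar\varphi\, d\mathbf x = h_0[\tilde w, \varphi|_\O] = 0,
\end{equation*}
so $W$ weakly satisfies $HW = 0$ throughout the connected slab $U$.

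Since $W$ vanishes identically on the open subset $U \cap \{x_3 < -\pi\}$, the strong unique continuation principle for second-order elliptic equations with Lipschitz leading coefficient (condition \eqref{lip:eq}) and lower-order coefficients in the range \eqref{01} with $p > 3$ forces $W \equiv 0$ on $U$. Consequently $\tilde w$ vanishes on the non-empty open set $\O \cap U$, and a second (purely interior) application of unique continuation on the connected domain $\O$ gives $\tilde w \equiv 0$, whence $w = J\tilde w = 0$. The main obstacle is this last step: it is precisely where the stronger regularity hypotheses \eqref{lip:eq} and $p > 3$ in \eqref{01} are required, as anticipated in the discussion after Theorem \ref{t1}, and one must invoke a Carleman-type inequality adapted to Lipschitz leading coefficients together with magnetic and electric perturbations of critical or slightly subcritical order (in the spirit of Aronszajn--Krzywicki--Szarski combined with Wolff's or Koch--Tataru's estimates). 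The remaining checks — that $\varphi|_\O$ is an admissible test function and that $W \in H^1(U)$ — are routine consequences of the vanishing Dirichlet trace of $\tilde w$ on $\Lambda_-$ and the separation of $\operatorname{supp}\varphi$ from the lateral boundary.
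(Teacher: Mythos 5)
Your argument is correct and is essentially the paper's proof in mirror image: the paper omits the reflection step, extends $w$ itself by zero across $\Lambda_+$ (where its trace vanishes) into the slab $(-\pi,\pi)^2\times(-\pi,4)$, and applies the Koch--Tataru unique continuation theorem once to the resulting weak solution. The only substantive difference is that your preliminary passage to $\tilde w=Jw$ uses the symmetry \eqref{even:eq}, which the lemma does not list among its hypotheses and which the paper's direct extension across $\Lambda_+$ avoids; otherwise the zero-extension, the verification that restrictions of compactly supported test functions are admissible, and the appeal to unique continuation match the paper's reasoning.
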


\begin{proof}
We extend the function $w$ by zero into the parallelepiped 
$\Xi = \L_- \times (-\pi,4)$:
$$
\tilde w (\mathbf x) = 
\begin{cases}
w(\mathbf x),\ \text{when}\ x_3 \le \pi, \\
0,\ \text{when}\ x_3 > \pi .
\end{cases}
$$
Clearly,
$\tilde w \in H^1 (\Xi)$, and
$$
\int_\Xi \left( \< \mathbf G(-i\n \tilde w 
- \mathbf A \tilde w), -i\n v - \mathbf A v \> 
+ V \tilde w \overline v \right) d\mathbf x 
= 0, \quad \forall v \in \mathring H^1 (\Xi) .
$$
Therefore $\tilde w$ is a weak solution of  the equation $H\tilde w = 0$ 
in $\Xi$. 
Now, the unique continuation principle for elliptic equations, see \cite{KT}, Theorem 1, 
implies that $\tilde w \equiv 0$ in $\Xi$.
\end{proof}

\begin{rem}
We need the conditions 
$\mathbf G\in \textup{Lip}$ and $A \in L_{p, \textup{\tiny loc}}$, $p>3$, 
instead of the "sharp" condition $A \in L_{3, \textup{\tiny loc}}$
for the unique continuation principle only.
\end{rem}

\begin{proof}[Proof of Theorem \ref{t52}]
By definition \eqref{kert1:eq}, for $u \in \ker t_1$ we have 
\begin{equation}\label{kernel:eq}
h_0 [u, J \phi] = 0 \quad \forall \phi \in Z.
\end{equation}
By Lemma \ref{Z:lem} the subspace $N$ is finite-dimensional. 
Let $\{u_k\}, k= 1, 2, \dots, n,$ be a basis in $N$.  
Consider on the Hilbert space $W^1_+$ linear functionals
$$
\ell(\psi) = \overline{h_0 [u, J \psi]}, \quad 
\ell_k(\psi) = \overline{h_0 [u_k, J  \psi]},\ \psi\in W^1_+.
$$
Since $JN = N$, by definition of $M$ we have  
$\cap_k \ker \ell_k = M$. On the other hand, if $\psi \in M$ then by 
Lemma \ref{l31} $\psi = \phi + w$ with $\phi\in Z, w\in W^{1, 0}$, 
so  
$$
h_0 [u, J \psi] = h_0 [u, J \phi] + h_0[u, Jw] = 0,
$$
where we have used \eqref{kernel:eq} and 
the fact that $u\in Z$. Thus $M \subset \ker \ell$.
By virtue of Lemma \ref{l53} there exists a function 
$u_0 \in N$ such that
$$
\ell(\psi) = \overline{h_0 [u_0, J \psi]}, \quad \forall\psi \in W^1_+.
$$
Therefore,
$$
h_0 [u-u_0, J \psi] = 0, \quad \forall \psi \in W^1_+.
$$
Putting $v=u-u_0$ we have
$h_0 [ v, J\psi] = 0$ for all $\psi \in W^1_+$.
By Lemma \ref{l54} $v = 0$, so that $u = u_0\in W^{1, 0}\cap Z$. By Lemma 
\ref{l31} $u = 0$ as claimed. 
\end{proof}

%%%%%%%%%%%%%%%%%%%%%%%%%%%%%%%%%%%%%%%%%%%%%%%%%%%%%%%%%%%%%%%%%%%%%%%%%%
\section{Proof of the main result}
\label{s6}

Recall that the operator $H(\mathbf k)$ depends on the quasi-momentum $\mathbf k$  
quadratically, i.e. it is 
a quadratic operator pencil. 
The decisive observation due to L. Friedlander \cite{Fr} is that 
the reflection symmetry allows one to reduce the 
analysis of $H(\mathbf k)$ to a linear operator pencil.

\subsection{An abstract lemma}
We will need the following abstract result. 
Let $\mathfrak H$ be a Hilbert space, 
and let $\mathfrak t$ 
be a bounded sesquilinear form defined on $\mathfrak H$. 
Similarly to \eqref{kert1:eq} we introduce the notation
\begin{equation*}
\ker \mathfrak t = \{\phi\in \mathfrak H: \mathfrak t[\phi, \psi] = 0,\ 
\forall \psi\in\mathfrak H\}.
\end{equation*}
The set $\ker\mathfrak t$ 
is a (closed) subspace. 
  
\begin{lemma}
\label{l61}
Let $\mathfrak H$ be a Hilbert space, and let $\mathfrak t_0$, $\mathfrak t_1$ 
be two bounded Hermitian sesquilinear forms on $\mathfrak H$. 
Let $\mathfrak L\subset \mathfrak H$ 
be a linear set such that $\mathfrak t_0[\phi]\le 0$ 
for any $\phi\in\mathfrak L$. 
Suppose that 
\begin{equation}\label{neg:eq}
m = \sup_{\mathfrak L} \dim\mathfrak L<\infty.
\end{equation}
Assume that $\ker \mathfrak t_1 = \{0\}$.
Then
\begin{equation*}
\# \left\{ z \in \C \setminus \R :
\ker (\mathfrak t_0 + z \mathfrak t_1) \neq \{0\} \right\} \le 2 m.
\end{equation*}
\end{lemma}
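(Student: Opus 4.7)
The plan is to enumerate the distinct points $z_1,\ldots,z_K$ in $\C\setminus\R$ for which $\ker(\mathfrak t_0+z_j\mathfrak t_1)\neq\{0\}$, select a non-zero $\phi_j$ in each kernel, and exploit the Hermiticity of $\mathfrak t_0,\mathfrak t_1$ together with $z_j\notin\R$ to produce strong vanishing relations among the values $\mathfrak t_i[\phi_j,\phi_k]$. These relations will then let me split the family into two subspaces indexed by the sign of $\im z_j$, on each of which $\mathfrak t_0$ vanishes identically; by hypothesis \eqref{neg:eq} both subspaces then have dimension at most $m$, yielding $K\le 2m$.

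The first computation is almost immediate: testing the kernel relation $(\mathfrak t_0+z_j\mathfrak t_1)[\phi_j,\cdot]=0$ against $\phi_j$ itself gives $\mathfrak t_0[\phi_j]+z_j\mathfrak t_1[\phi_j]=0$, and since both diagonal values are real (Hermiticity) while $z_j$ is not, each of $\mathfrak t_0[\phi_j]$, $\mathfrak t_1[\phi_j]$ must vanish. For the cross terms I test the kernel relation for $\phi_j$ against $\phi_k$, conjugate the analogous relation for $\phi_k$ against $\phi_j$, and equate using Hermiticity; this produces $(\overline{z_k}-z_j)\mathfrak t_1[\phi_j,\phi_k]=0$, so whenever $z_j\neq\overline{z_k}$ I obtain $\mathfrak t_1[\phi_j,\phi_k]=0$ and hence also $\mathfrak t_0[\phi_j,\phi_k]=0$.

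The next step, which I expect to be the main technical point, is linear independence of the $\phi_j$'s, and it is the only place the hypothesis $\ker\mathfrak t_1=\{0\}$ enters. I proceed by induction: if $\phi_1,\ldots,\phi_{s-1}$ are independent then so are the functionals $g_l:=\mathfrak t_1[\phi_l,\cdot]$, because any relation $\sum b_l g_l=0$ would put $\sum b_l\phi_l$ into $\ker\mathfrak t_1=\{0\}$ and thereby reduce to a dependence among the $\phi_l$. Now if $\phi_s=\sum_{l<s}a_l\phi_l$, applying $\mathfrak t_0+z_s\mathfrak t_1$ and using $\phi_l\in\ker(\mathfrak t_0+z_l\mathfrak t_1)$ yields $\sum_{l<s}a_l(z_s-z_l)g_l\equiv 0$; independence of the $g_l$ together with distinctness of the $z_l$ forces every $a_l=0$, contradicting $\phi_s\neq 0$ and closing the induction.

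Finally, set $E_\pm=\operatorname{span}\{\phi_j:\pm\im z_j>0\}$. For any two indices $j,k$ with $\im z_j>0$ and $\im z_k>0$, $\overline{z_k}$ lies in the lower half-plane and hence differs from $z_j$, so the cross-vanishing above gives $\mathfrak t_0[\phi_j,\phi_k]=0$; combined with the diagonal vanishing, this says $\mathfrak t_0\equiv 0$ on $E_+$. In particular $\mathfrak t_0|_{E_+}\le 0$, so \eqref{neg:eq} gives $\dim E_+\le m$, and by linear independence this is precisely the number of $z_j$ with $\im z_j>0$. Symmetrically $\dim E_-\le m$, and since $\C\setminus\R$ is exhausted by the two open half-planes, $K\le 2m$ as required. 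The splitting at the end is almost formal; all the genuine work is concentrated in the independence step.
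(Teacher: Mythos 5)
Your proof is correct and follows essentially the same route as the paper's: the cross-term identity $(z_j-\overline{z_k})\,\mathfrak t_1[\phi_j,\phi_k]=0$ from Hermiticity, the inductive linear-independence argument whose only input is $\ker\mathfrak t_1=\{0\}$ (the paper subtracts the two kernel relations to land a vector in $\ker\mathfrak t_1$, which is the same computation you phrase via the functionals $g_l$), and the conclusion that $\mathfrak t_0$ vanishes on the span coming from each open half-plane, bounded by $m$ via \eqref{neg:eq}. The only cosmetic difference is that the paper works with the full kernel subspaces $\mathfrak G_j$ rather than one representative vector each; both yield the bound $2m$.
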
 

Clearly this Lemma can be generalised to unbounded forms with appropriate 
restrictions on $\mathfrak t_0, \mathfrak t_1$ but it is unnecessary for our purposes. 

\begin{proof}
Let 
\begin{equation*}
F = \{z_1, z_2, \dots, z_n\},\ \im z_j >0, j = 1,2 , \dots, n,
\end{equation*}
be a finite set of distinct points in the complex plane such that 
\begin{equation*}
\mathfrak G_j = \ker (\mathfrak t_0+z_j \mathfrak t_1)\not 
= \{ 0\}, j = 1, 2, \dots, n. 
\end{equation*}
Let us show that 
the subspaces $\mathfrak G_j$ are linearly independent. We proceed by induction. 
If $n = 1$ then there is nothing to proof. 

Let $1\le p \le n-1$. 
Suppose that 
any $p$-tuple of non-zero vectors 
$\phi_k\in \mathfrak G_k, k = 1, 2, \dots, p$  are linearly-independent. Suppose 
also that $\phi_{p+1}\in\mathfrak G_{p+1}$ is a vector such that 
\begin{equation}\label{comb:eq}
\phi_{p+1} = \sum_{k=1}^p \alpha_k \phi_k,
\end{equation}
and at least one coefficient $\alpha_k$ is non-zero. 
By definition of $\mathfrak H_k$,
\begin{equation*}
\mathfrak t_0[\phi_k, w] + z_k \mathfrak t_1[\phi_k, w] = 0, 
\quad \forall w\in \mathfrak H, 
\end{equation*}
for all $k = 1, 2, \dots, p+1$.  
Therefore
\begin{equation*}
\sum_{k=1}^p\al_k \mathfrak t_0[\phi_k, w] 
+ \sum_{k=1}^p \al_k z_k \mathfrak t_1[\phi_k, w] = 0, 
\end{equation*}
and 
\begin{equation*}
\sum_{k=1}^p\al_k \mathfrak t_0[\phi_k, w] 
+ \sum_{k=1}^p \al_k z_{p+1} \mathfrak t_1[\phi_k, w] = 0,
\end{equation*}
for all $w\in \mathfrak H$, where we have used  \eqref{comb:eq}. Subtracting one equation from the other we get 
\begin{equation*}
\mathfrak t_1 \left[ \sum_{k=1}^{p} \al_k (z_k-z_{p+1}) \phi_k, w \right] = 0, 
\quad \forall \ w \in \mathfrak H.
\end{equation*}
Recalling again that $\ker \mathfrak t_1 = \{0\}$, we conclude that 
$$
\sum_{k=1}^{p} \al_k (z_k-z_{p+1}) \phi_k = 0,
$$
which means that the set $\{\phi_1, \phi_2, \dots, \phi_p\}$ is 
linearly dependent. This gives a contradiction, and hence the 
$(p+1)$-tuple containing also $\phi_{p+1}$ are linearly independent as well. 
By induction all kernels $\mathfrak G_j, j=1, 2, \dots, n$ are linearly-independent, 
and as a consequence, $\# F\le \dim \mathfrak G$ where 
\begin{equation*}
\mathfrak G = \bigoplus_{j=1}^n \mathfrak G_j.
\end{equation*}
Now, for any $\phi_j\in\mathfrak G_j, \phi_k\in \mathfrak G_k$, we have 
\begin{equation*}
\begin{cases}
\mathfrak t_0[\phi_j, \phi_k] + z_j\mathfrak t_1[\phi_j, \phi_k] = 0,\\[0.2cm]
\mathfrak t_0[\phi_j, \phi_k] + \overline z_k\mathfrak t_1[\phi_j, \phi_k] = 0, 
\end{cases}
\end{equation*}
where we have used that $\mathfrak t_0, \mathfrak t_1$ are Hermitian. 
Since $\im z_j, \im z_k >0$, we conclude that 
$\mathfrak t_0[\phi_j, \phi_k] = \mathfrak t_1[\phi_j, \phi_k] = 0$. As a consequence, 
\begin{equation*}
\mathfrak t_0[\phi, \psi] 
= \mathfrak t_1[\phi, \psi] = 0, \quad \forall \phi, \psi\in \mathfrak G. 
\end{equation*}
In particular, $\mathfrak t_0[\phi] = 0$, so that 
$\dim \mathfrak G\le m$, and hence, $\# F\le m$, i.e. 
\begin{equation*}
\# \left\{ z \in \C,\  \im z >0:
\ker (\mathfrak t_0 + z\mathfrak t_1) \neq \{0\} \right\} \le m.
\end{equation*} 
In the same way one proves that the number of 
such points in the lower half-plane is also bounded by $m$. 
This completes the proof. 
\end{proof}

Note in passing that if any of the forms $\mathfrak t_0$ or $\mathfrak t_1$ is positive-definite 
then the set 
\begin{equation}\label{edro:eq}
\{z\in \mathbb C\setminus\mathbb R: \ker (\mathfrak t_0 + z\mathfrak t_1) \not = \{0\}\}
\end{equation}
is trivially empty. Indeed, assume for example that $\mathfrak t_1$ is positive-definite. 
Let $T_0, T_1$ be the operators associated with the forms $\mathfrak t_0, \mathfrak t_1$ 
respectively. Thus $\ker (\mathfrak t_0 + z\mathfrak t_1) \not = \{0\}$ iff the number 
$z$ belongs to the spectrum of the self-adjoint operator $-T_1^{-\frac{1}{2}} T_2 T_1^{-\frac{1}{2}}$.
Thus $z\in \R$, which implies that the set \eqref{edro:eq} is empty, as claimed.

\subsection{Proof of Theorem \ref{t2}}
We begin the study of the problem 
\eqref{per3:eq}, \eqref{24} with the analysis 
of the following system for a function 
$u\in W^1$: 
\begin{equation}
\label{42}
\begin{cases}
h_0 [u,v] = 0, \quad \forall v \in W^{1,0}, \\
\left. u \right|_{\Lambda_+} = \ze \left. u \right|_{\Lambda_-}.
\end{cases}
\end{equation}

\begin{lemma}\label{l41}
 Suppose that the conditions \eqref{g:eq}, 
 \eqref{01} with $p=3$,  and \eqref{even:eq} are 
satisfied. 
Let $\ze \neq \pm 1$. Then any solution of \eqref{42} has the form 
\begin{equation}
\label{43}
u = \phi + \ze J \phi + \omega, 
\quad \text{where} \quad \phi\in Z, \ \omega \in N.
\end{equation}
\end{lemma}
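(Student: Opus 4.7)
The plan is to produce the decomposition by constructing $\phi$ from the antisymmetrized combination $\tilde u := u - \zeta Ju$ and then reading off $\omega$ as the remainder. The key observation driving the whole scheme is that although $u$ itself satisfies only the boundary relation $u|_{\Lambda_+} = \zeta u|_{\Lambda_-}$, the function $\tilde u$ satisfies $\tilde u|_{\Lambda_+} = u|_{\Lambda_+} - \zeta u|_{\Lambda_-} = 0$, so $\tilde u \in W^1_+$. The $J$-invariance of $W^{1,0}$ combined with the symmetry identity $h_0[Ju,Jv] = h_0[u,v]$ then shows that $\tilde u$ still satisfies $h_0[\tilde u, v] = 0$ for every $v \in W^{1,0}$, and so in particular $\tilde u \in M$.

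First I would invoke Lemma \ref{l31} to decompose $\tilde u = \phi' + \omega'$ uniquely with $\phi' \in Z$ and $\omega' \in W^{1,0}$. Because $h_0[\tilde u, v] = 0$ for all $v \in W^{1,0}$ and $h_0[\phi', v] = 0$ by the definition of $Z$, the remainder $\omega'$ actually satisfies $h_0[\omega', v] = 0$ for every $v \in W^{1,0}$; together with $\omega' \in W^{1,0}$ this yields $\omega' \in N$. In particular $\omega'$ vanishes on $\Lambda_\pm$, which lets me compute the boundary trace of $\phi'$: from $\tilde u|_{\Lambda_-} = u|_{\Lambda_-} - \zeta (Ju)|_{\Lambda_-} = (1 - \zeta^2) u|_{\Lambda_-}$ I read off $\phi'|_{\Lambda_-} = (1 - \zeta^2) u|_{\Lambda_-}$. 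This is where the hypothesis $\zeta \neq \pm 1$ enters: setting $\phi := (1 - \zeta^2)^{-1} \phi'$ defines an element of $Z$ satisfying $\phi|_{\Lambda_-} = u|_{\Lambda_-}$ and $\phi|_{\Lambda_+} = 0$.

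Next I would put $\omega := u - \phi - \zeta J\phi$ and verify that it lies in $N$. Using $(J\phi)|_{\Lambda_\pm} = \phi|_{\Lambda_\mp}$ together with the boundary condition in \eqref{42}, a short trace computation gives $\omega|_{\Lambda_+} = \zeta u|_{\Lambda_-} - 0 - \zeta\,\phi|_{\Lambda_-} = 0$ and $\omega|_{\Lambda_-} = u|_{\Lambda_-} - \phi|_{\Lambda_-} - 0 = 0$, so $\omega \in W^{1,0}$. For the weak equation, for any $v \in W^{1,0}$ one has $h_0[u, v] = 0$ by \eqref{42}, $h_0[\phi, v] = 0$ because $\phi \in Z$, and $h_0[J\phi, v] = h_0[\phi, Jv] = 0$ because $Jv \in W^{1,0}$ and the symmetry \eqref{even:eq} gives $h_0[J\cdot,J\cdot] = h_0[\cdot,\cdot]$. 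Hence $h_0[\omega, v] = 0$ for all $v \in W^{1,0}$ and $\omega \in W^{1,0}$, which together say $\omega \in N$. Rearranging produces the required representation $u = \phi + \zeta J\phi + \omega$.

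The main obstacle is essentially bookkeeping: one has to notice the refinement that the $W^{1,0}$-component of $\tilde u$ in the splitting $M = Z \dot + W^{1,0}$ is automatically in $N$ (not merely in $W^{1,0}$), which is what allows the trace of $\phi'$ to be read off cleanly and then divided by $1 - \zeta^2$. Once that is in place, the remaining content reduces to elementary trace identities under the reflection $J$ and to the observation that $J$ preserves both $W^{1,0}$ and $N$.
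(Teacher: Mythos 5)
Your argument is correct and takes essentially the same route as the paper: antisymmetrize to get $u-\zeta Ju\in W^1_+\cap M$, split it via Lemma \ref{l31}, and observe that the $W^{1,0}$-component automatically lies in $N$. The only (cosmetic) difference is that the paper absorbs the factor $(1-\zeta^2)^{-1}$ into $\psi$ from the start and reads off the representation from the identity $u=\psi+\zeta J\psi$, whereas you verify $\omega=u-\phi-\zeta J\phi\in N$ by direct trace and weak-equation computations.
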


\begin{proof}
Let $u$ be a solution to \eqref{42}. 
Then the function 
$\psi = (1-\ze^2)^{-1} (u-\ze Ju)$ belongs to $W^1_+$ and solves 
the equation $h_0[\psi, v] = 0,\ \forall v\in W^{1, 0}$,  
and hence $\psi \in M$. 
By Lemma \ref{l31}, $\psi = \phi+ w$ where $\phi\in Z$ and $w \in W^{1, 0}$. 
Consequently $w, Jw\in N$. 
By inspection, 
\begin{equation}\label{inver:eq}
u = \psi + \ze J \psi,
\end{equation}
so that the representation \eqref{43} holds with $\omega = w+\ze J w\in N$. 
\end{proof}

\begin{proof}[Proof of Theorem \ref{t2}] 
Let $\ze\in X$, and let $u\in W^1$ 
be a non-trivial solution of the system  
\eqref{per3:eq}, \eqref{24}. 
By virtue of Lemma \ref{l41},
$u = \phi + \ze J \phi + \omega$, 
with some $\phi\in Z$ and $\omega \in N$.

First, consider the case $\phi = 0$.
Then $u =\omega\in N$. Let us use \eqref{24} 
with the function $w = \overline\ze  f + J f$ 
where $f \in W^1_+$ is an arbitrary function. Thus 
$$
h_0 [\ze J u + u, J f] = h_0 [u, \overline\ze f + J f] = 0 .
$$
Lemma \ref{l54} yields
$J u = - \ze^{-1} u$. 
On the other hand, the spectrum of the operator $J$ 
consists of two numbers $1$ and $-1$ only, but $|\ze|\not = 1$, so $u = 0$, 
which gives a contradiction.

Now, assume that $\phi \neq 0$.
For a function $\psi \in Z$ substitute
$w = J \psi + \overline\ze \psi$
into \eqref{24}, and obtain
\begin{eqnarray*}
0 = h_0 [u, w] = 
h_0 \left[\phi + \ze J \phi + \omega, 
J \psi + \overline\ze  \psi \right] \\
= \left(1+\ze^2\right) t_1 [\phi, \psi] + 2 \ze t_0 [\phi, \psi],
\end{eqnarray*}
where we have used the fact that $J W^{1, 0} = W^{1, 0}$. 
Therefore,
$$
t_0 [\phi, \psi] + z t_1 [\phi, \psi] = 0 ,
\quad z = \frac{1+\ze^2}{2\ze}.
$$
In view of the conditions $\im \ze\not = 0$, $|\ze|\not=1$ we have  
$\im z\not = 0$. 
The forms $t_0, t_1$ satisfy all the conditions of Lemma \ref{l61}. 
Indeed, both forms are bounded on $Z$, 
$\ker t_1 = \{0\}$ by Theorem \ref{t52}, and 
the condition \eqref{neg:eq} is satisfied by virtue of \eqref{vari:eq}.
Therefore Lemma \ref{l61} yields that $\# X \le 2m <\infty$. 
This completes the proof. 
\end{proof}

As explained earlier, Theorem \ref{t2} implies Theorem \ref{t1} 
stating the absolute continuity 
of the operator $H$.

%%%%%%%%%%%%%%%%%%%%%%%%%%%%%%%%%%%%%%%%%%%%%%%%%%%%%%%%%%%%%%%%%%%

\end{document}